\newtheorem{theorem}{Theorem}
\newtheorem{example}{Example}
\newtheorem{remark}{Remark}
\newtheorem{definition}{Definition}
\date{}
\begin{document}

\markboth{M.V.Zenkina}
{The parity hierarchy and new invariants of knots in thickened surfaces}

\title{THE PARITY HIERARCHY AND NEW INVARIANTS OF KNOTS IN THICKENED SURFACES}

\author{M.V.ZENKINA}

\maketitle

\begin{abstract} In the present paper, we construct an invariant for virtual knots in the thickened
sphere $S_{g}$ with $g$ handles; this invariant is a Laurent polynomial in $2g+3$ variables. To this end, we use a modification of the Wirtinger presentation of the knot group and the concept of parity introduced by V.O.Manturov. By using this invariant, one can prove that the knots shown in Fig. \ref{ris1.12(1)} are not equivalent \cite{GMV1}. The section $4$ of the paper is devoted to an enhancement of the invariant (construction of the invariant module) by using the parity hierarchy concept suggested by V.O.Manturov.
Namely, we discriminate between odd crossings and two types of even crossings; the latter two
types depend on whether an even crossing remains even/odd after all odd crossings of
the diagram are removed. The construction of the invariant also works for virtual knots.
\end{abstract}

\section{Introduction}

In \cite{KaV}, L.H.Kauffman introduced virtual knot theory, which was an important generalization of classical knot theory.
Virtual knots are knots in thickened $2$-surfaces considered up to isotopy and stabilizations/destabilizations.
Many invariants of classical knots have been generalized to virtual knot theory, for example Khovanov homology and different modifications of the Alexander polynomial associated with
Wirtinger's presentation \cite{VA}. In the standard Wirtinger presentation of the knot group generators correspond to arcs
and relations correspond to crossings. In the present paper, we shall construct invariants for virtual knots by using the Wirtinger presentation of the knot group and the concept of parity
introduced by V.O.Manturov \cite{VOM}. We first shall use the parity hierarchy suggested by V.O.Manturov in order to construct new invariants.

The paper is organized as follows. In Section $2$ we shall construct a polynomial $s$, which is a generalization of the Alexander polynomial of knots in thickened spheres $S_g$ with $g$ handles. In order to construct this invariant, we shall use the Wirtinger presentation of the knot group and the concept of parity of crossings, introduced by V.O.Manturov \cite{VOM}. In Section $3$ we shall prove the invariance of polynomial $s$ and prove that the knots shown in Fig. \ref{ris1.121} are not equivalent. A question about the equivalence of these knots is stated in \cite{GMV1}. In Section $4$ of this paper we first use the parity hierarchy suggested by V.O.Manturov and construct an invariant module $N(K)$. We discriminate between odd crossings and two types of even crossings; the latter two
types depend on whether an even crossing remains even/odd after all odd crossings of
the diagram are removed. In Section $5$ we shall construct a simplification of $N(K)$ in order to obtain
an invariant polynomial for virtual knots.

\begin{figure}
\centering\includegraphics[width=400pt]{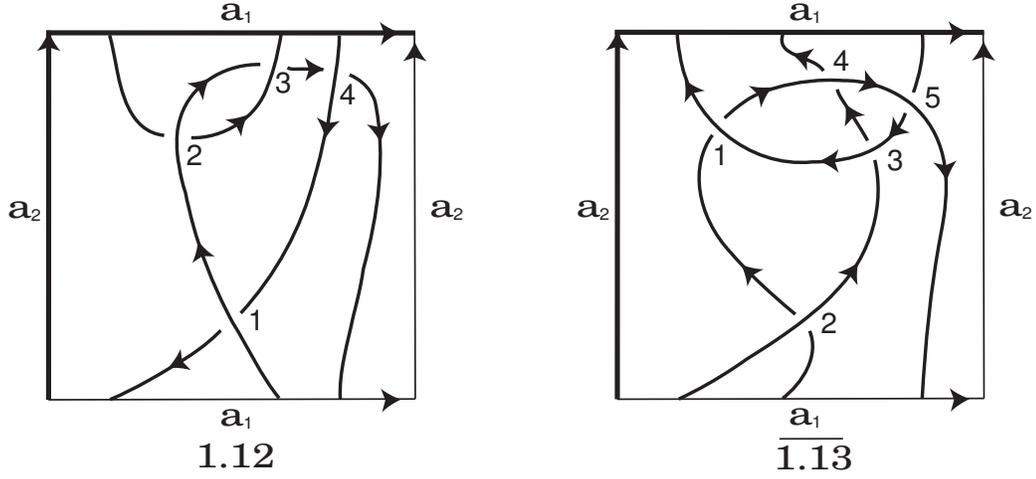}
\caption{Knot diagrams $1.12$ and $\overline{1.13}$}
\label{ris1.121}
\end{figure}

\section{The main construction}

Let $S_{g}$ be the closed $2$-surface of genus $g$. Let $L$ be a knot in the thickening $S_{g}\times I$ of $S_{g}$, $I$ being an interval.

\begin{definition}
Let $G$ be a graph with the set of vertices $V(G)$ and the set of edges $E(G)$. We think of an edge as an equivalence class of the two half-edges
constituting the edge. We say that a vertex $v \in V(G)$ has the degree $4$ if $v$
is incident to $4$ half-edges. A graph whose vertices have the same degree $4$
is called a \emph{$4$-graph}.
\end{definition}

Generically, a projection of $L$ to $S_{g}$ is a regular $4$-graph with each vertex having an over/under crossing structure. Two graphs are equivalent if and only if one of them is obtained from the other graph by using the standard Reidemeister moves, see Fig. \ref{move}. Let us represent $S_{g}$ as a $4g$-gon $P_{g}$ with opposite sides identified.  Later on, we shall depict the knot diagram on $P_{g}$. We fix an orientation on $S_g$.
Let $K$ be a diagram of an oriented knot $L$ on $S_g$ with $n$ crossings. Let $V(K)$ be the set of all crossings of the knot diagram $K$. We shall 	
construct a \emph{chord diagram} for any knot diagram on $P_{g}$ in the following way. A chord
diagram consists of a circle (with a fixed point, not a pre-image of a crossing) on which the pre-images of the overcrossing and the undercrossing (for each crossing) are connected by a chord (Fig. \ref{gaus}).

\begin{figure}
\centering\includegraphics[width=350pt]{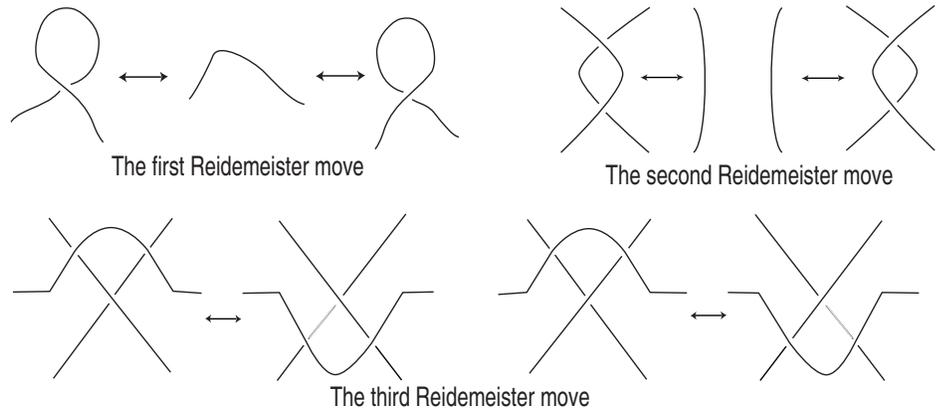}
\caption{Reidemeister moves} \label{move}
\end{figure}

\begin{figure}
\centering\includegraphics[width=300pt]{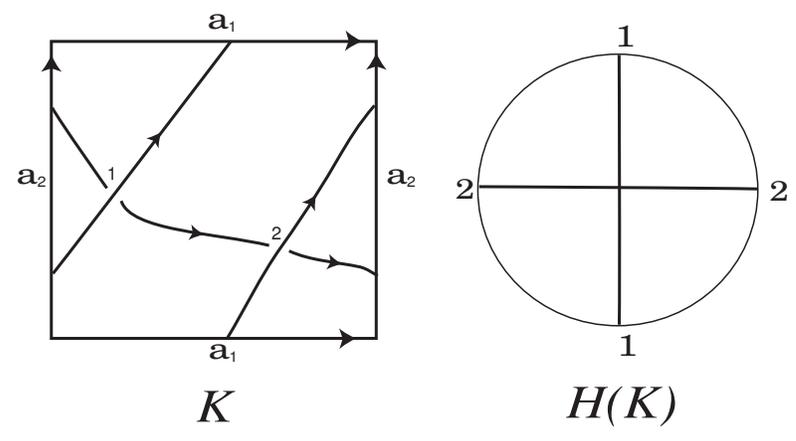}
\caption{Construction of the chord diagram} \label{gaus}
\end{figure}

\begin{definition}

Let $K_1$ and $K_2$ be knot diagrams obtained from each other by one of the Reidemeister moves, so that the number of crossings of $K_{2}$ is less than or equal to that of $K_{1}$. A parity is a rule which associates $0$ or $1$ with every crossing of each diagram in such a way that:

1. If $K_2$ is obtained from $K_1$ by a first Reidemeister move, then the crossing of the diagram $K_1$ in the first move is even;

2. If $K_2$ is obtained from $K_1$ by a second Reidemeister move, then the both crossings in the second move have the same parity;

3. If $K_2$ is obtained from $K_1$ by a third Reidemeister move, then we have a natural correspondence between triple of the crossings of $K_1$ and triple of the crossings of $K_2$ $(a_1$, $a_2)$, $(b_1$, $b_2)$, $(c_1$, $c_2)$ in the third move, see Fig. \ref{3move1}. We require that
a) the parity of the corresponding crossings coincides;
b) among the crossings $a_1, b_1, c_1$ the number of odd crossings is even i.e. $0$ or $2$.

\begin{figure}
\centering\includegraphics[width=350pt]{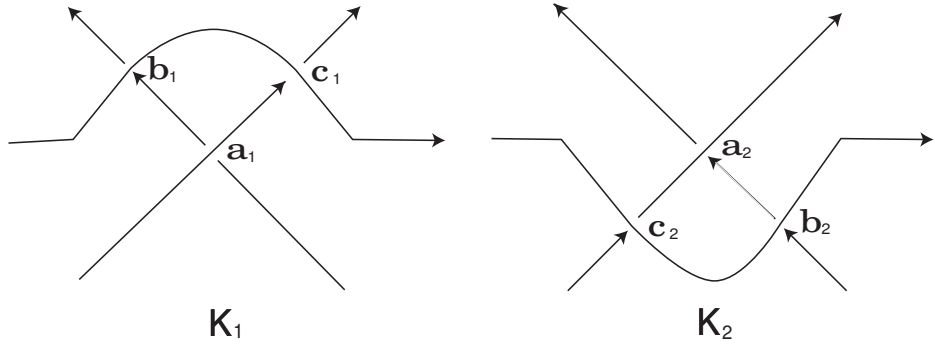}
\caption{Correspondence between the crossings of $K_1$ and $K_2$} \label{3move1}
\end{figure}

4. The parity of crossings not taking part in Reidemeister moves does not change after these moves is performed \cite{VOM}.

\end{definition}

\begin{definition}
By \emph{an arc} of a knot diagram we mean a connected
component of the diagram.
Thus, each arc always goes over; it starts and stops at undercrossings. For
knot diagrams in general position, each vertex is incident to three arcs.
\end{definition}

Let us define the Gaussian parity as follows. Let $L$ be a knot in $S_{g}\times I$. We are going to distinguish between even and odd crossings of a diagram $K$ of $L$. A crossing $v$ is \emph{even} if the number of the chords on the chord diagram $H(K)$ which intersect the chord, corresponding to the crossing $v$, is even. Otherwise, it is \emph{odd}. In even crossings we shall set local labels on arcs as shown in Fig. \ref{ris2}, and in odd crossings we shall set as shown in Fig. \ref{ris1}.

\begin{figure}
\centering\includegraphics[width=70pt]{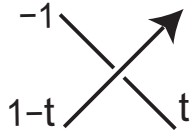}
\caption{Local labels in even crossings} \label{ris2}
\end{figure}

\begin{figure}
\centering\includegraphics[width=70pt]{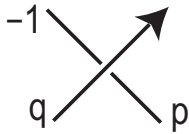}
\caption{Local labels in odd crossings} \label{ris1}
\end{figure}

Let us enumerate crossings of the diagram by integers from $1$ to $n$, where $n$ is the number of crossings of $K$. Evidently, the number of arcs is also equal to $n$. We enumerate the arcs of the diagram by the same numbers as crossings in such a way that the arc with number $k$ originates from the crossing with the number $k$. Denote by ${a_1,a_2,...,a_{2g}}$ the sides of $P_g$ (Fig. \ref{vos}).

\begin{figure}
\centering\includegraphics[width=150pt]{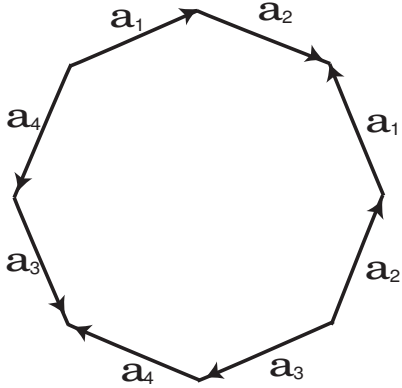}
\caption{Fundamental octagon} \label{vos}
\end{figure}

In the $4g$-gon $P_{g}$ we choose one side from each 	
pair of pasted sides. The sides divide some arcs into subarcs. First, let us define labels of subarcs as follows. We assign label $(0,0,...,0)$ to each subarc starting in $k$-th crossing. Then, when passing through a side $a_m$ the $m$-th element of the label is changed by $+1$	if we pass through a selected side $a_m$, i.e. $(0,0,...,1,...,0)$, or by $-1$ if we pass through a non-selected side $a_m$, i.e. $(0,0,...,-1,...,0)$ (see Fig. \ref{ris13}).

\begin{figure}
\centering\includegraphics[width=200pt]{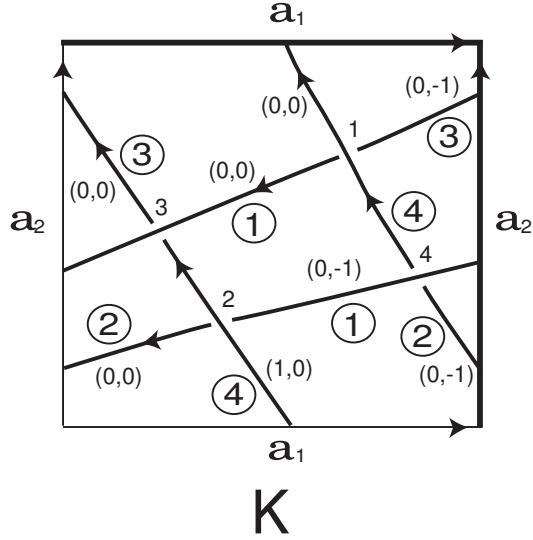}
\caption{Labels of arcs for the diagram in $T^{2}$} \label{ris13}
\end{figure}

Let $$G =\mathbb{Z}[t^{\pm1}, q, p^{\pm1}, x_1^{\pm1}, x_2^{\pm1},...,x_{2g}^{\pm1}]/ (q(p-t)=0,q^2=(1-t)(1-p))$$ be the quotient ring, $g$ being the number of handles.
Let us construct an $n\times{n}$-matrix $M(K)$ with elements in $G$ as follows. With each arc, we associate a column, and a row corresponds to each crossing. If some $j$-th arc is not incident to an $i$-th crossing, then we set $M_{ij}$ to be $0$. In the case, when only one subarc of the $j$-th arc is incident to the $i$-th crossing, then the element $M_{ij}$ shall be equal to the monomial $x_1^{\alpha_1}x_2^{\alpha_2}\cdot...\cdot x_{2g}^{\alpha_{2g}}$ multiplied by a local label, i.e. by one of the monomial $-1,1-t,t$ in even crossings and $-1,p,q$ in odd crossings, $(\alpha_1,\alpha_2,...,\alpha_{2g})$ is the label corresponding to the subarc  in this crossing (Fig. \ref{ris2}, Fig. \ref{ris1}). If there are several incident subarcs, then the corresponding element of the matrix is equal to the sum of such monomials for all incident subarcs.

We obtain the matrix $M(K)$, dependent on diagram $K$. Let $s(K)$ be $\mathrm{det}(M(K))$, where $s(K)\in G$.

\begin{theorem}
If two diagrams $K$ and $K'$ of knots in $S_g\times I$ are equivalent, then $s(K)=\pm s(K')\cdot t^{\alpha}p^{\beta}q^{\gamma}$ for some integers $\alpha,\beta,\gamma$.
\end{theorem}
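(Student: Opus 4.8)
The plan is to show that $s(K)=\det M(K)$ is left unchanged, up to a factor $\pm t^{\alpha}p^{\beta}q^{\gamma}$, by the choices entering the construction (the enumeration of crossings and arcs, and the base point on the chord diagram) and by each of the three Reidemeister moves; since equivalent diagrams differ by a finite sequence of these moves, this yields the theorem. A re-enumeration merely permutes the rows and columns of $M(K)$, changing $\det M(K)$ by $\pm1$. The base point plays no role either: whether two chords of a chord diagram cross is independent of it, so the parity of every crossing, and hence every local label, is unaffected. For the Reidemeister moves one reduces, by a standard argument, to the case where the move is supported in a disk disjoint from the sides of $P_g$, so that the newly created arcs carry the trivial label $(0,\dots,0)$ and no variable $x_i$ enters the comparison of $M(K)$ and $M(K')$. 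Then for each move the strategy is the same: identify the columns of $M(K')$ not belonging to new arcs with the columns of $M(K)$, and perform over $G$ a sequence of operations that alter $\det$ only by the allowed factor --- adding a $G$-multiple of one row (or column) to another, and multiplying a row (or column) by one of $\pm1,\ t^{\pm1},\ p^{\pm1},\ q$ --- so as to bring $M(K')$ into the block shape $\left(\begin{smallmatrix} A & 0\\ B & M(K)\end{smallmatrix}\right)$ with $\det A=\pm t^{a}p^{b}q^{c}$; then $s(K')=\pm t^{\alpha}p^{\beta}q^{\gamma}\,s(K)$.

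For the first Reidemeister move one crossing $v$ and one arc $e$ are added; by Axiom~1 of the parity definition $v$ is \emph{even}, so the row of $v$ uses the labels $-1,\,1-t,\,t$ of Fig.~\ref{ris2}, and it meets the (otherwise empty) column of $e$ in exactly one of them, which the local picture of a kink shows to be a unit, $t$ or $-1$; dividing the row of $v$ by it and clearing the column of $e$ gives the block form with $A=(\pm t)$ or $(\pm1)$. For the second move two crossings are added, of \emph{equal} parity by Axiom~2, together with two new arcs, and finitely many entries of two pre-existing rows are altered; the $2\times2$ block on the two new crossings and the two new arcs is invertible over $G$ after normalising by Figs.~\ref{ris2}--\ref{ris1}, with $\det A=\pm1$, and subtracting suitable multiples of the two new rows from the remaining rows restores $M(K)$ on the complementary block. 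The only parity information used so far is Axioms~1 and~2.

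The crux is the third Reidemeister move. A parity-preserving correspondence matches the triples $(a_1,b_1,c_1)$ and $(a_2,b_2,c_2)$ (Fig.~\ref{3move1}), and Axiom~3b forces the number of odd crossings among $a_1,b_1,c_1$ to be $0$ or $2$. The arc combinatorics is unchanged outside the triangle, so $M(K_1)$ and $M(K_2)$ share all of their columns and differ only in the rows $a,b,c$ and a few entries created by the rerouting of the central strand; what must be verified is that the two $3\times(\text{few})$ row-blocks are carried into one another by an invertible row transformation over $G$. When all three crossings are even, every label in play lies in $\{-1,1-t,t\}$ and this is precisely Fox's classical verification that the Alexander matrix is invariant under $\Omega_3$ over $\mathbb{Z}[t^{\pm1}]$. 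When two crossings are odd and one even, the computation intertwines the triple $\{-1,1-t,t\}$ with the triple $\{-1,p,q\}$, and the $3\times3$ determinant identity that has to hold reduces exactly to the two defining relations $q(p-t)=0$ and $q^2=(1-t)(1-p)$ of $G$; these were, in effect, extracted from this very computation. The parity patterns with one or three odd crossings --- which would force relations incompatible with $t,p,q$ being otherwise unconstrained --- are exactly the ones excluded by Axiom~3b, and this is the point at which Manturov's parity is genuinely needed.

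The main obstacle is therefore this last computation: one must read off from Figs.~\ref{ris2}, \ref{ris1} and \ref{3move1} the precise three rows produced on each side of the move --- with the correct assignment of $-1,1-t,t$ versus $-1,p,q$ to the over-, the incoming under- and the outgoing under-arc, and the correct orientation signs --- and then check the resulting polynomial identity in $\mathbb{Z}[t^{\pm1},q,p^{\pm1}]/(q(p-t),\ q^2-(1-t)(1-p))$. The remaining ingredients --- re-enumeration, base-point independence of parity, and the first two moves --- are routine linear algebra over $G$. I would organise the write-up as: a lemma that the admissible row and column operations multiply $\det$ only by a factor $\pm t^{\alpha}p^{\beta}q^{\gamma}$; then $\Omega_1$; then $\Omega_2$; and finally $\Omega_3$, split into the all-even sub-case and the two-odd sub-case, the latter being where the defining relations of $G$ are invoked.
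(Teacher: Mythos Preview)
Your outline correctly locates the crux in the two-odd-one-even subcase of $\Omega_3$, where the relations $q(p-t)=0$ and $q^2=(1-t)(1-p)$ are exactly what is needed, and the overall strategy matches the paper's. There is, however, one genuine gap. You write that ``one reduces, by a standard argument, to the case where the move is supported in a disk disjoint from the sides of $P_g$.'' This reduction is not free: the matrix $M(K)$ is built from the fixed fundamental polygon, and the labels $(\alpha_1,\dots,\alpha_{2g})$ record precisely how arcs cross its sides. To slide a Reidemeister move away from the sides you must drag crossings across them, and it has to be proved that this leaves $s(K)$ unchanged up to the allowed factor. The paper treats this ``crossing passes through a side'' as a separate move on the same footing as $\Omega_1,\Omega_2,\Omega_3$ and checks it by explicit matrix manipulation (their matrices $(1)$--$(4)$), using an auxiliary \emph{arc-subdivision} trick (insert a bivalent vertex to split one arc into two; this changes $s$ only by $t^{\alpha}$). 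A related slip: even once the move sits in such a disk, the pre-existing arcs entering the disk may still cross sides elsewhere, so the rows in question still carry monomials $x_1^{b_1}\cdots x_{2g}^{b_{2g}}$ --- the paper's $\Omega_2$ and $\Omega_3$ matrices all do. These factors ride through the row operations unchanged, so nothing actually breaks, but the claim that ``no variable $x_i$ enters the comparison'' is not correct as written.

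Two smaller technical points. In $\Omega_2$ your $2\times2$ block on the new rows and columns has determinant $-t$ (even subcase) or $-p$ (odd subcase), not $\pm1$; and the single underpassing arc of $K$ splits into several columns of $M(K')$, so the promised ``identification of old columns'' needs one more column operation before the lower block is literally $M(K)$. The paper sidesteps both issues via the same arc-subdivision device, enlarging $M(K)$ to the size of $M(K')$ and comparing rows directly. Finally, ``multiply a row by $q$'' should not be on your list of admissible operations: $q$ is a zero-divisor in $G$ (since $q(p-t)=0$), so this is not reversible; the paper never uses it, and the factors that actually occur in the proof are of the form $\pm t^{\alpha}p^{\beta}$.
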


\section{The Proof of the Theorem $1$}

Note that the crossing numeration change means that we switch columns
which yields an overall minus sign of the determinant. So we can enumerate the crossings arbitrarily.

Within this paragraph by invariance we shall always mean that the polynomial $s$ is invariant up to multiplication by $t^{\alpha}p^{\beta}q^{\gamma}$ for some integers $\alpha,\beta,\gamma$ unless specified otherwise.

Let us consider a following operation. Let $K$ be a diagram of a knot in $S_{g}\times I$, $l$ being some arc. We divide $l$ into two arcs $1$ and $2$ by adding a vertex $v$. The vertex $v$ is incident only to the arcs $1$ and $2$. All crossings which are incident to the arc $l$ divide into the crossings which incident to the arc $1$ and the crossings which incident to the arc $2$. Let us consider a matrix $M'(K)$ for obtained diagram. The row corresponding to the vertex $v$ has two non-zero elements in the $1$-st and $2$-nd columns (Fig. \ref{versh}). Note that the column corresponding to the arc $l$ subdivide into two columns such that their sum is equal the column corresponding to the $l$. Thus the determinants of $M(K)$ and $M'(K)$ are equal up to a multiplication by $t^\alpha$, $\alpha \in \mathbb{Z}$. Therefore the polynomial $s$ is the invariant under such dividing of the arc of the diagram $K$.

\begin{figure}
\centering\includegraphics[width=70pt]{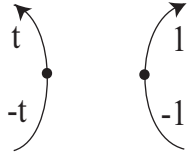}
\caption{Local labels under subdivision of an arc}
\label{versh}
\end{figure}

Since we have fixed sides of $P_{g}$ and represent diagrams on a $4g$-gon, we have to check the invariance not only under Reidemeister moves, but also under moves when a crossing pass through sides. Let us first prove the invariance of $s$ under moves when the crossing pass through sides in the case when the crossing $1$ is even (Fig. \ref{crosspass}). We divide an arc of diagrams $K$ and $K'$ into two arcs as shown in Fig. \ref{ris16} (see \cite{VO}); we assign the number $2$ to the new vertices.

\begin{remark}
The vertex $2$ in Fig. \ref{ris16} is even, because on chord diagrams this vertex does not generate chord, i.e. it does not intersect other chords.
\end{remark}

\begin{figure}
\centering\includegraphics[width=300pt]{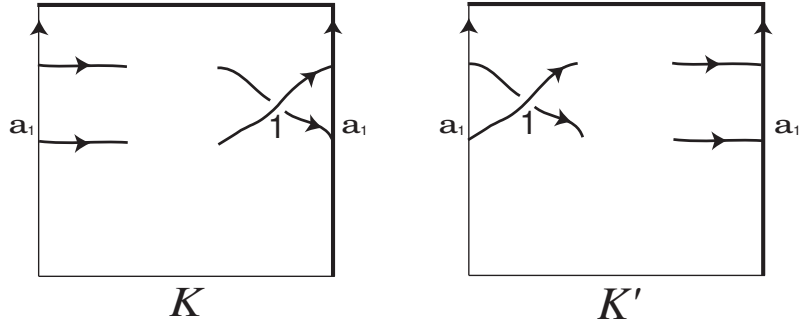}
\caption{A crossing passes through a side (first case)}
\label{crosspass}
\end{figure}

\begin{figure}
\centering\includegraphics[width=300pt]{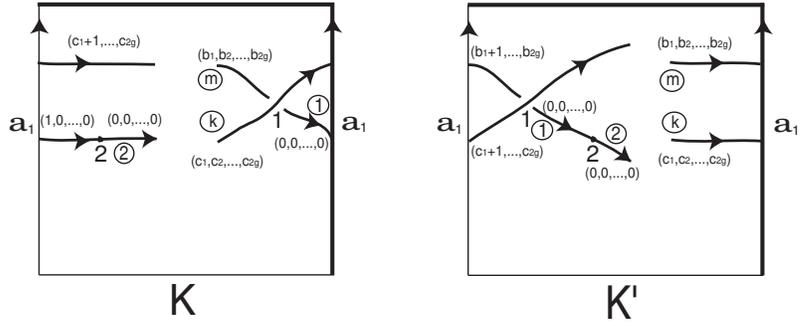}
\caption{The local labels after adding the vertex}
\label{ris16}
\end{figure}

As seen from Fig. \ref{ris16}, all rows except for the first two ones are identical. Let us write the corresponding matrices for the diagrams $K$ and $K'$:

\begin{equation}
M'(K)=\left(
\begin{array}{ccccccc}
 t& 0 & 0\dots 0 & -x_1^{b_1}\cdot...\cdot x_{2g}^{b_{2g}} & 0\dots 0 & (1-t)x_1^{c_1}\cdot...\cdot x_{2g}^{c_{2g}} & 0\dots 0 \cr -x_1 & 1 & 0\dots 0 & 0 & 0\dots 0 & 0 & 0\dots 0 \cr 0& * & * & * & * & * & *
\end{array}\right);
\label{1}
\end{equation}
\begin{equation}
M'(K')=\left(
\begin{array}{ccccccc}
 t& 0 & 0\dots 0 & -x_1^{b_1+1}\cdot...\cdot x_{2g}^{b_{2g}} & 0\dots 0 & (1-t)x_1^{c_1+1}\cdot...\cdot x_{2g}^{c_{2g}} & 0\dots 0 \cr -1 & 1 & 0\dots 0 & 0 & 0\dots 0 & 0 & 0\dots 0 \cr 0& * & * & * & * & * & *
\end{array}\right).
\label{2}
\end{equation}

Note, that all elements in the first column of both matrices are equal to $0$ except for the elements $M_{11}$ and $M_{21}$.
In (\ref{1}), let us multiply the first column by $x_1^{-1}$ and multiply the first row by $x_1$. As a result, we obtain the matrix which coincides with (\ref{2}).

Let us consider the case shown in Fig. \ref{ris17}, where the crossing $1$ is even. Then the corresponding matrices of the diagrams shall have the form:

\begin{figure}
\centering\includegraphics[width=300pt]{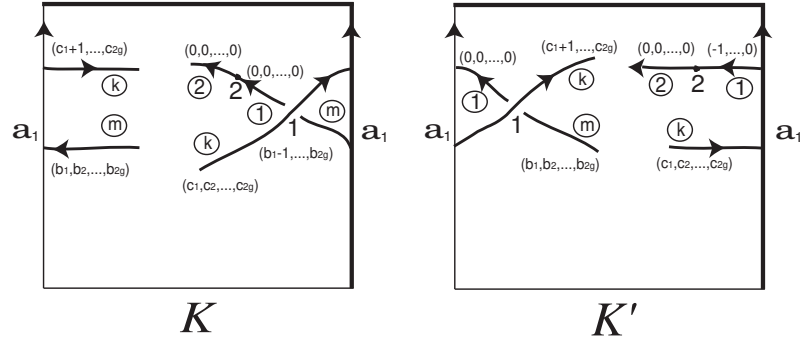}
\caption{A crossing passes through a side (second case)}
\label{ris17}
\end{figure}

\begin{equation}
M'(K)=\left(
\begin{array}{ccccccc}
  -1& 0 & 0\dots 0 & tx_1^{b_1-1}\cdot...\cdot x_{2g}^{b_{2g}} & 0\dots 0 & (1-t)x_1^{c_1}\cdot...\cdot x_{2g}^{c_{2g}} & 0\dots 0 \cr -1 & 1 & 0\dots 0 & 0 & 0\dots 0 & 0 & 0\dots 0 \cr 0 & * & * & * & * & * & *
\end{array}\right);
\label{3}
\end{equation}
\begin{equation}
M'(K')=\left(
\begin{array}{ccccccc}
 -1& 0 & 0\dots 0 & tx_1^{b_1}\cdot...\cdot x_{2g}^{b_{2g}} & 0\dots 0 & (1-t)x_1^{c_1+1}\cdot...\cdot x_{2g}^{c_{2g}} & 0\dots 0 \cr -x_1^{-1} & 1 & 0\dots 0 & 0 & 0\dots 0 & 0 & 0\dots 0 \cr 0 & * & * & * & * & * & *
\end{array}\right).
\label{4}
\end{equation}

Note, that in the first column in both matrices all elements are equal to $0$ except for the elements $M_{11}$ and $M_{21}$ just as in the previous case.
In the matrix (\ref{4}), let us multiply the first column by $x_1$ and the first row by $x_1^{-1}$. 	
Finally, we obtain the matrix identical to the matrix (\ref{3}).

Similarly we can prove the invariance in other cases of the move when a crossing pass through sides. Thus the polynomial $s$ is invariant under moves, at which one of the crossings passes through a side.

Now, to prove the theorem we verify the invariance under Reidemeister moves.

As we know (see for example \cite{Oht}), to establish the equivalence of two diagrams it is sufficient to use only one version of the $3$-rd Reidemeister move and all versions of the $1$-st and $2$-nd Reidemeister moves.

Let us first prove the invariance under the $1$-st Reidemeister move.
Suppose $K'$ is obtained from a diagram $K$ by the addition of a loop, see Fig. \ref{ris8}. As we know from the parity axiomatics, the crossing of the diagram $K'$ in the $1$-st move is even. We split the arc of the diagram $K$ into two arcs. Let us enumerate the crossings of the diagrams $K$ and $K'$ as follows: for the first crossing we take the crossing obtained
by the addition of the loop in the $K'$ and the new vertex in the $K$ and the numbers of the remaining crossings of $K$ and $K'$ accordingly identical, see Fig. \ref{ris8}.

\begin{figure}
\centering\includegraphics[width=300pt]{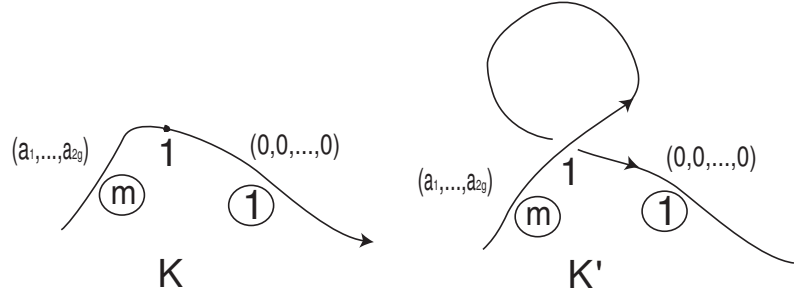} \caption{The first Reidemeister move} \label{ris8}
\end{figure}

Thus the rows of the matrices of the corresponding diagrams $K$ and $K'$ shall be identical everywhere except for the $1$-st row. If we write the first rows of these matrices, then we see that the determinants are equal up to a multiplication by $t^\alpha$, $\alpha\in \mathbb{Z}$. Indeed,

$$M'(K)=\left(
\begin{array}{cccc}
1 & 0\dots 0 & -x_1^{a_1}\cdot...\cdot x_{2g}^{a_{2g}} & 0 \dots 0\cr * & * & * & *
\end{array}\right);$$

$$M(K')=\left(
\begin{array}{cccc}
t & 0\dots 0 & -tx_1^{a_1}\cdot...\cdot x_{2g}^{a_{2g}} & 0 \dots 0\cr * & * & * & *
\end{array}\right).$$

Similarly we can prove the invariance under three other versions of the $1$-st Reidemeister move.

Let us now prove the invariance of the polynomial $s$ under the $2$-nd Reidemeister move. We first consider its co-oriented version. Suppose the diagram $K'$ is obtained from the diagram $K$ by an addition of two crossings as shown in Fig. \ref{ris9}.

\begin{figure}
\centering\includegraphics[width=200pt]{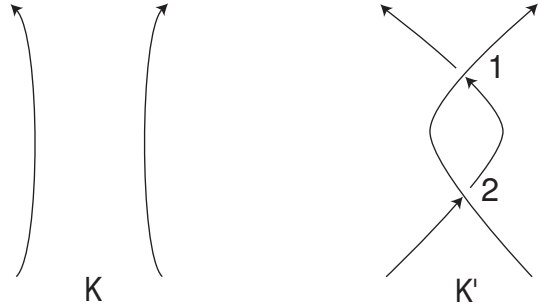} \caption{The second Reidemeister move} \label{ris9}
\end{figure}

According to the parity axiomatics both crossings in the $2$-nd Reidemeister move have the same parity. Hence we obtain two subcases in the case of the co-oriented Reidemeister move. We shall consider the crossings $1$ and $2$ are even in the $1$-st case and the crossings $1$ and $2$ are odd in the $2$-nd case.

Let us consider the first subcase. We divide the arc of the diagram $K$ by two vertices into three arcs and change the enumerations of the crossings of the diagrams $K$ and $K'$ as follows. For the $1$-st and $2$-nd crossings in the diagram $K'$ we take the crossings participating in the second move; in the diagram $K$ we take the new vertices and assign to the crossing with number $t$ of the diagram $K$
the number $t+2$ on the diagrams $K'$ and $K$ (Fig. \ref{ris4}).

\begin{figure}
\centering\includegraphics[width=300pt]{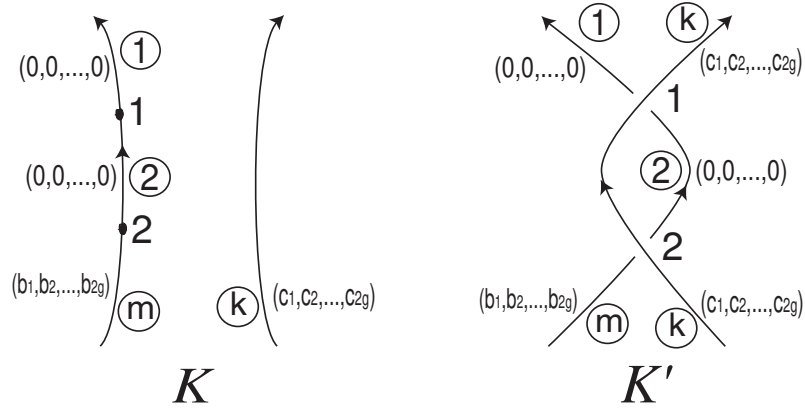}
\caption{Subdivision of an arc and construction of the local labels} \label{ris4}
\end{figure}

Note that the rows of the matrices are identical everywhere except for the $1$-st and $2$-nd rows. Let us write the matrices for the diagrams $K$ and $K'$:

\begin{equation}
M'(K)=\left(
\begin{array}{cccccc}
t & -t & 0\dots 0 & 0 & 0\dots 0 & 0\cr 0 & t & 0\dots 0 & 0 & 0\dots 0 & -tx_1^{b_1}\cdot...\cdot x_{2g}^{b_{2g}} \cr * & 0 & * & * & * & *
\end{array}\right);
\label{5}
\end{equation}
\begin{equation}
M(K')=\left(
\begin{array}{cccccc}
 -1 & t & 0\dots 0 & (1-t)x_1^{c_1}\cdot...\cdot x_{2g}^{c_{2g}} & 0\dots 0 & 0\cr 0 & t & 0\dots 0 & (1-t)x_1^{c_1}\cdot...\cdot x_{2g}^{c_{2g}} & 0\dots 0 & -x_1^{b_1}\cdot...\cdot x_{2g}^{b_{2g}} \cr * & 0 & * & * & * & *
\end{array}\right).
\label{6}
\end{equation}

We see that in the $2$-nd column in both matrices all elements are equal to $0$ except for $M_{12}$ and $M_{22}$.
In the matrix (\ref{5}) we add the $1$-st row to the $2$-nd row, in the matrix (\ref{6}) we multiply the $1$-st row by $-1$ and add to the $2$-nd row. We get:

\begin{equation}
\left(
\begin{array}{cccccc}
t & -t & 0\dots 0 & 0 & 0\dots 0 & 0\cr t & 0 & 0\dots 0 & 0 & 0\dots 0 & -tx_1^{b_1}\cdot...\cdot x_{2g}^{b_{2g}} \cr * & 0 & * & * & * & *
\end{array}\right);
\label{7}
\end{equation}
\begin{equation}
\left(
\begin{array}{cccccc}
 -1 & t & 0\dots 0 & (1-t)x_1^{c_1}\cdot...\cdot x_{2g}^{c_{2g}} & 0\dots 0 & 0\cr 1 & 0 & 0\dots 0 & 0 & 0\dots 0 & -x_1^{b_1}\cdot...\cdot x_{2g}^{b_{2g}} \cr * & 0 & * & * & * & *
\end{array}\right).
\label{8}
\end{equation}

Expanding the determinants of the matrices (\ref{7}) and (\ref{8}) with respect to the second column, we see that the determinant of the matrix (\ref{7}) is equal to the determinant of the matrix (\ref{8}) multiplied by $-t$.

Let us consider now the second subcase, when both crossings $1$ and $2$ are odd (Fig. \ref{ris4}). We write the matrices for the diagrams $K$ and $K'$:

\begin{equation}
M'(K)=\left(
\begin{array}{cccccc}
t & -t & 0\dots 0 & 0 & 0\dots 0 & 0\cr 0 & t & 0\dots 0 & 0 & 0\dots 0 & -tx_1^{b_1}\cdot...\cdot x_{2g}^{b_{2g}} \cr * & 0 & * & * & * & *
\end{array}\right);
\label{9}
\end{equation}
\begin{equation}
M(K')=\left(
\begin{array}{cccccc}
 -1& p & 0\dots 0 & qx_1^{c_1}\cdot...\cdot x_{2g}^{c_{2g}} & 0\dots 0 & 0\cr 0 & p & 0\dots 0 & qx_1^{c_1}\cdot...\cdot x_{2g}^{c_{2g}} & 0\dots 0 & -x_1^{b_1}\cdot...\cdot x_{2g}^{b_{2g}} \cr * & 0 & * & * & * & *
\end{array}\right).
\label{10}
\end{equation}

We note that in the $2$-nd column in both matrices all elements are equal to $0$ except for $M_{12}$ and $M_{22}$ as in the previous case. In the matrix (\ref{9}) we add the $1$-st row to the $2$-nd row and in the matrix (\ref{10}) we multiply the $1$-st row by $-1$ and add to the $2$-nd row. We obtain:

\begin{equation}
\left(
\begin{array}{cccccc}
t & -t & 0\dots 0 & 0 & 0\dots 0 & 0\cr t & 0 & 0\dots 0 & 0 & 0\dots 0 & -tx_1^{b_1}\cdot...\cdot x_{2g}^{b_{2g}} \cr * & 0 & * & * & * & *
\end{array}\right);
\label{11}
\end{equation}
\begin{equation}
\left(
\begin{array}{cccccc}
 -1 & p & 0\dots 0 & qx_1^{c_1}\cdot...\cdot x_{2g}^{c_{2g}} & 0\dots 0 & 0\cr 1 & 0 & 0\dots 0 & 0 & 0\dots 0 & -x_1^{b_1}\cdot...\cdot x_{2g}^{b_{2g}} \cr * & 0 & * & * & * & *
\end{array}\right).
\label{12}
\end{equation}

Expanding the determinants of the matrices (\ref{11}) and (\ref{12}) with respect to the second column, we see that the determinants are equal up to multiplication by $\pm t^\alpha p^\beta$, where $\alpha,\beta \in \mathbb{Z}$.

Similarly we can prove the invariance in the two subcases of the co-oriented version of the $2$-nd Reidemeister move and four subcases of the oppositely oriented version.

Let us prove the invariance under the third Reidemeister move. We enumerate the crossings of diagrams $K$ and $K'$ so that the numbers of the crossings participating in the Reidemeister move have the numbers from $1$ to $3$, and the numbers of other crossings were so that the arcs participating in the third Reidemeister move had the numbers from $1$ to $6$, Fig. \ref{ris6}.

\begin{figure}
\centering\includegraphics[width=400pt]{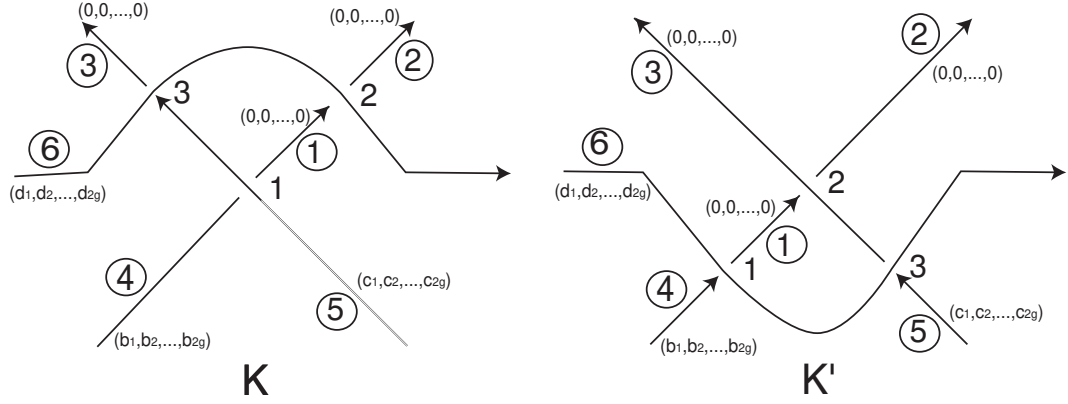} \caption{The third Reidemeister move} \label{ris6}
\end{figure}

As we know, the number of odd crossings among $1,2,3$ is even. Let us consider first the case when all three crossings are even. We write the matrices corresponding the diagrams $K$ and $K'$:

$$M(K)=\left(
\begin{array}{ccccccc}
t & 0 & 0 & -x_1^{b_1}\cdot...\cdot x_{2g}^{b_{2g}} & (1-t)x_1^{c_1}\cdot...\cdot x_{2g}^{c_{2g}} & 0 & 0\dots 0\cr t & -1 & 0 & 0 & 0 & (1-t)x_1^{d_1}\cdot...\cdot x_{2g}^{d_{2g}} & 0\dots 0 \cr 0 & 0 & -1 & 0 & tx_1^{c_1}\cdot...\cdot x_{2g}^{c_{2g}} & (1-t)x_1^{d_1}\cdot...\cdot x_{2g}^{d_{2g}} & 0 \dots 0 \cr
0 & * & * & * & * & * & *
\end{array}\right);$$

$$M(K')=\left(
\begin{array}{ccccccc}
 -1& 0 & 0 & tx_1^{b_1}\cdot...\cdot x_{2g}^{b_{2g}} & 0 & (1-t)x_1^{d_1}\cdot...\cdot x_{2g}^{d_{2g}} & 0\dots 0\cr -1 & t & 1-t & 0 & 0 & 0 & 0\dots 0 \cr 0 & 0 & -1 & 0 & tx_1^{c_1}\cdot...\cdot x_{2g}^{c_{2g}} & (1-t)x_1^{d_1}\cdot...\cdot x_{2g}^{d_{2g}} & 0 \dots 0 \cr
0 & * & * & * & * & * & *
\end{array}\right).$$

In both matrices in the $1$-st column we have $0$ starting from the $3$-rd row. All rows of these matrices are identical except for the $1$-st and $2$-nd rows.

In both matrices we multiply the $1$-st row by $-1$ and add it to the $2$-nd row:

\begin{equation}
\left(
\begin{array}{ccccccc}
 t& 0 & 0 & -x_1^{b_1}\cdot...\cdot x_{2g}^{b_{2g}} & (1-t)x_1^{c_1}\cdot...\cdot x_{2g}^{c_{2g}} & 0 & 0\dots 0\cr 0 & -1 & 0 & x_1^{b_1}\cdot...\cdot x_{2g}^{b_{2g}} & (t-1)x_1^{c_1}\cdot...\cdot x_{2g}^{c_{2g}} & (1-t)x_1^{d_1}\cdot...\cdot x_{2g}^{d_{2g}} & 0\dots 0 \cr 0 & 0 & -1 & 0 & tx_1^{c_1}\cdot...\cdot x_{2g}^{c_{2g}} & (1-t)x_1^{d_1}\cdot...\cdot x_{2g}^{d_{2g}} & 0 \dots 0 \cr
0 & * & * & * & * & * & *
\end{array}\right);
\label{13}
\end{equation}
\begin{equation}
\left(
\begin{array}{ccccccc}
 -1 & 0 & 0 & tx_1^{b_1}\cdot...\cdot x_{2g}^{b_{2g}} & 0 & (1-t)x_1^{d_1}\cdot...\cdot x_{2g}^{d_{2g}} & 0\dots 0\cr 0 & t & 1-t & -tx_1^{b_1}\cdot...\cdot x_{2g}^{b_{2g}} & 0 & (t-1)x_1^{d_1}\cdot...\cdot x_{2g}^{d_{2g}} & 0\dots 0 \cr 0 & 0 & -1 & 0 & tx_1^{c_1}\cdot...\cdot x_{2g}^{c_{2g}} & (1-t)x_1^{d_1}\cdot...\cdot x_{2g}^{d_{2g}} & 0 \dots 0 \cr
0 & * & * & * & * & * & *
\end{array}\right).
\label{14}
\end{equation}

Let us add the second column to the third column in (\ref{13}) and (\ref{14}). Finally we obtain the two matrices:

\begin{equation}
\left(
\begin{array}{ccccccc}
 t & 0 & 0 & -x_1^{b_1}\cdot...\cdot x_{2g}^{b_{2g}} & (1-t)x_1^{c_1}\cdot...\cdot x_{2g}^{c_{2g}} & 0 & 0\dots 0\cr 0 & -1 & -1 & x_1^{b_1}\cdot...\cdot x_{2g}^{b_{2g}} & (t-1)x_1^{c_1}\cdot...\cdot x_{2g}^{c_{2g}} & (1-t)x_1^{d_1}\cdot...\cdot x_{2g}^{d_{2g}} & 0\dots 0 \cr 0 & 0 & -1 & 0 & tx_1^{c_1}\cdot...\cdot x_{2g}^{c_{2g}} & (1-t)x_1^{d_1}\cdot...\cdot x_{2g}^{d_{2g}} & 0 \dots 0 \cr
0 & * & * & * & * & * & *
\end{array}\right);
\label{15}
\end{equation}
\begin{equation}
\left(
\begin{array}{ccccccc}
-1 & 0 & 0 & tx_1^{b_1}\cdot...\cdot x_{2g}^{b_{2g}} & 0 & (1-t)x_1^{d_1}\cdot...\cdot x_{2g}^{d_{2g}} & 0\dots 0\cr 0 & t & 1 & -tx_1^{b_1}\cdot...\cdot x_{2g}^{b_{2g}} & 0 & (t-1)x_1^{d_1}\cdot...\cdot x_{2g}^{d_{2g}} & 0\dots 0 \cr 0 & 0 & -1 & 0 & tx_1^{c_1}\cdot...\cdot x_{2g}^{c_{2g}} & (1-t)x_1^{d_1}\cdot...\cdot x_{2g}^{d_{2g}} & 0 \dots 0 \cr
0 & * & * & * & * & * & *
\end{array}\right).
\label{16}
\end{equation}

Now, we add the third row to the second row in (\ref{16}) and we multiply the third row by $-1$ and add it to the second row in (\ref{15}). We have:

\begin{equation}
\left(
\begin{array}{ccccccc}
t & 0 & 0 & -x_1^{b_1}\cdot...\cdot x_{2g}^{b_{2g}} & (1-t)x_1^{c_1}\cdot...\cdot x_{2g}^{c_{2g}} & 0 & 0\dots 0\cr 0 & -1 & 0 & x_1^{b_1}\cdot...\cdot x_{2g}^{b_{2g}} & -x_1^{c_1}\cdot...\cdot x_{2g}^{c_{2g}} & 0 & 0\dots 0 \cr 0 & 0 & -1 & 0 & tx_1^{c_1}\cdot...\cdot x_{2g}^{c_{2g}} & (1-t)x_1^{d_1}\cdot...\cdot x_{2g}^{d_{2g}} & 0 \dots 0 \cr
0 & * & * & * & * & * & *
\end{array}\right);
\label{17}
\end{equation}
\begin{equation}
\left(
\begin{array}{ccccccc}
-1 & 0 & 0 & tx_1^{b_1}\cdot...\cdot x_{2g}^{b_{2g}} & 0 & (1-t)x_1^{d_1}\cdot...\cdot x_{2g}^{d_{2g}} & 0\dots 0\cr 0 & t & 0 & -tx_1^{b_1}\cdot...\cdot x_{2g}^{b_{2g}} & tx_1^{c_1}\cdot...\cdot x_{2g}^{c_{2g}} & 0 & 0\dots 0 \cr 0 & 0 & -1 & 0 & tx_1^{c_1}\cdot...\cdot x_{2g}^{c_{2g}} & (1-t)x_1^{d_1}\cdot...\cdot x_{2g}^{d_{2g}} & 0 \dots 0 \cr
0 & * & * & * & * & * & *
\end{array}\right).
\label{18}
\end{equation}

Expanding the determinants of the matrices (\ref{17}) and (\ref{18}) with respect to the first column, we see that the determinants are equal.

Let us consider now the case, when the crossings $1$ and $3$ of the diagram $K$ are odd and the crossing $2$ is even (Fig.\ref{ris6}). 	
According to the parity axiomatics in the diagram $K'$ the crossings $1$ and $2$ are odd and the crossing $3$ is even. We write the matrices for the diagrams $K$ and $K'$:

$$M(K)=\left(
\begin{array}{ccccccc}
p & 0 & 0 & -x_1^{b_1}\cdot...\cdot x_{2g}^{b_{2g}} & qx_1^{c_1}\cdot...\cdot x_{2g}^{c_{2g}} & 0 & 0 \dots 0\cr t & -1 & 0 & 0 & 0 & (1-t)x_1^{d_1}\cdot...\cdot x_{2g}^{d_{2g}} & 0\dots 0 \cr 0 & 0 & -1 & 0 & px_1^{c_1}\cdot...\cdot x_{2g}^{c_{2g}} & qx_1^{d_1}\cdot...\cdot x_{2g}^{d_{2g}} & 0\dots 0 \cr
0 & * & * & * & * & * & *
\end{array}\right);$$

$$M(K')=\left(
\begin{array}{ccccccc}
-1 & 0 & 0 & tx_1^{b_1}\cdot...\cdot x_{2g}^{b_{2g}} & 0 & (1-t)x_1^{d_1}\cdot...\cdot x_{2g}^{d_{2g}} & 0\dots 0\cr -1 & p & q & 0 & 0 & 0 & 0\dots 0 \cr 0 & 0 & -1 & 0 & px_1^{c_1}\cdot...\cdot x_{2g}^{c_{2g}} & qx_1^{d_1}\cdot...\cdot x_{2g}^{d_{2g}} & 0 \dots 0 \cr
0 & * & * & * & * & * & *
\end{array}\right).$$

The first column of both matrices has $0$ entries in all positions starting from the third. All rows of these matrices are identical except for the $1$-st and $2$-nd rows.

In the first matrix we multiply the $1$-st row by $-\frac{t}{p}$ and add it to the second row; in the $2$-nd matrix we multiply the $3$-rd row by $q$ and add it to the second row. We obtain:

\begin{equation}\left(
\begin{array}{ccccccc}
p & 0 & 0 & -x_1^{b_1}\cdot...\cdot x_{2g}^{b_{2g}} & qx_1^{c_1}\cdot...\cdot x_{2g}^{c_{2g}} & 0 & 0 \dots 0\cr 0 & -1 & 0 & \frac{t}{p}x_1^{b_1}\cdot...\cdot x_{2g}^{b_{2g}} & -\frac{tq}{p}x_1^{c_1}\cdot...\cdot x_{2g}^{c_{2g}} & (1-t)x_1^{d_1}\cdot...\cdot x_{2g}^{d_{2g}} & 0\dots 0 \cr 0 & 0 & -1 & 0 & px_1^{c_1}\cdot...\cdot x_{2g}^{c_{2g}} & qx_1^{d_1}\cdot...\cdot x_{2g}^{d_{2g}} & 0\dots 0 \cr
0 & * & * & * & * & * & *
\end{array}\right);
\label{19}
\end{equation}

\begin{equation}\left(
\begin{array}{ccccccc}
-1 & 0 & 0 & tx_1^{b_1}\cdot...\cdot x_{2g}^{b_{2g}} & 0 & (1-t)x_1^{d_1}\cdot...\cdot x_{2g}^{d_{2g}} & 0\dots 0\cr -1 & p & 0 & 0 & qpx_1^{c_1}\cdot...\cdot x_{2g}^{c_{2g}} & q^2x_1^{d_1}\cdot...\cdot x_{2g}^{d_{2g}} & 0\dots 0 \cr 0 & 0 & -1 & 0 & px_1^{c_1}\cdot...\cdot x_{2g}^{c_{2g}} & qx_1^{d_1}\cdot...\cdot x_{2g}^{d_{2g}} & 0 \dots 0 \cr
0 & * & * & * & * & * & *
\end{array}\right).
\label{20}
\end{equation}

In (\ref{19}), we multiply the $1$-st column by $\frac{1}{p}$ and the $2$-nd row by $p$:

\begin{equation}\left(
\begin{array}{ccccccc}
1 & 0 & 0 & -x_1^{b_1}\cdot...\cdot x_{2g}^{b_{2g}} & qx_1^{c_1}\cdot...\cdot x_{2g}^{c_{2g}} & 0 & 0 \dots 0\cr 0 & -p & 0 & tx_1^{b_1}\cdot...\cdot x_{2g}^{b_{2g}} & -tqx_1^{c_1}\cdot...\cdot x_{2g}^{c_{2g}} & p(1-t)x_1^{d_1}\cdot...\cdot x_{2g}^{d_{2g}} & 0\dots 0 \cr 0 & 0 & -1 & 0 & px_1^{c_1}\cdot...\cdot x_{2g}^{c_{2g}} & qx_1^{d_1}\cdot...\cdot x_{2g}^{d_{2g}} & 0\dots 0 \cr
0 & * & * & * & * & * & *
\end{array}\right).
\label{21}
\end{equation}

In (\ref{20}), we multiply the $1$-st row by $-1$ and add it to the $2$-nd row:

\begin{equation}\left(
\begin{array}{ccccccc}
-1 & 0 & 0 & tx_1^{b_1}\cdot...\cdot x_{2g}^{b_{2g}} & 0 & (1-t)x_1^{d_1}\cdot...\cdot x_{2g}^{d_{2g}} & 0\dots 0\cr 0 & p & 0 & -tx_1^{b_1}\cdot...\cdot x_{2g}^{b_{2g}} & qpx_1^{c_1}\cdot...\cdot x_{2g}^{c_{2g}} & (q^2+t-1)x_1^{d_1}\cdot...\cdot x_{2g}^{d_{2g}} & 0\dots 0 \cr 0 & 0 & -1 & 0 & px_1^{c_1}\cdot...\cdot x_{2g}^{c_{2g}} & qx_1^{d_1}\cdot...\cdot x_{2g}^{d_{2g}} & 0 \dots 0 \cr
0 & * & * & * & * & * & *
\end{array}\right).
\label{22}
\end{equation}

Since the relations $q(p-t)=0$, $q^2=(1-t)(1-p)$ hold in the quotient ring $G$, then the matrix (\ref{22}) shall take the form:

\begin{equation}\left(
\begin{array}{ccccccc}
-1 & 0 & 0 & tx_1^{b_1}\cdot...\cdot x_{2g}^{b_{2g}} & 0 & (1-t)x_1^{d_1}\cdot...\cdot x_{2g}^{d_{2g}} & 0\dots 0\cr 0 & p & 0 & -tx_1^{b_1}\cdot...\cdot x_{2g}^{b_{2g}} & tqx_1^{c_1}\cdot...\cdot x_{2g}^{c_{2g}} & -p(1-t)x_1^{d_1}\cdot...\cdot x_{2g}^{d_{2g}} & 0\dots 0 \cr 0 & 0 & -1 & 0 & px_1^{c_1}\cdot...\cdot x_{2g}^{c_{2g}} & qx_1^{d_1}\cdot...\cdot x_{2g}^{d_{2g}} & 0 \dots 0 \cr
0 & * & * & * & * & * & *
\end{array}\right).
\label{23}
\end{equation}

Expanding the determinants of the matrices (\ref{21}) and (\ref{23}) with respect to the first column we see that the determinants are equal.

Arguing similarly for the other subcases of the third Reidemeister move we obtain that the matrices corresponding to the diagrams $K$ and $K'$ are equal.

\begin{example} Let us consider the diagrams $1.12$ and $\overline{1.13}$ of the knots in the torus $T^2$ from \cite{GMV1} (Fig. \ref{ris1.121}). We define the parity of the crossings of these diagrams (Fig. \ref{gaus1}). We see that all crossings of both diagrams are even. Let us assign the labels on the arcs of the given diagrams (Fig. \ref{ris1.122}).

\begin{figure}
\centering\includegraphics[width=400pt]{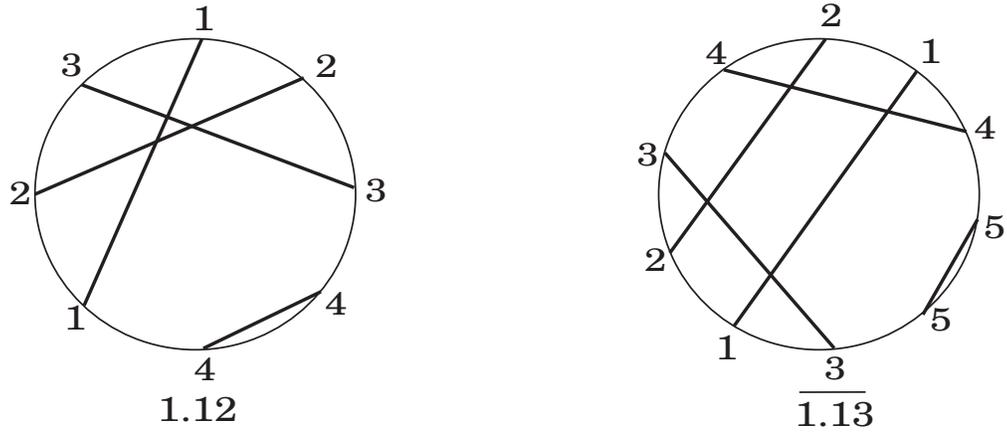} \caption{Chord diagrams of the knots $1.12$ and $\overline{1.13}$} \label{gaus1}
\end{figure}

\begin{figure}
\centering\includegraphics[width=400pt]{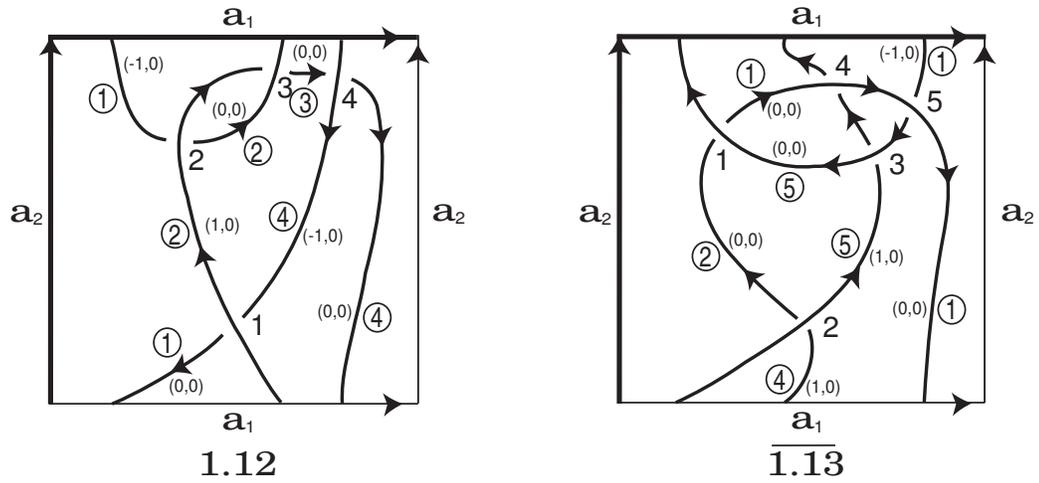} \caption{Diagrams with labels} \label{ris1.122}
\end{figure}

We write the matrices $M_{1.12}$ and $M_{\overline{1.13}}$ corresponding to the knot diagrams $1.12$ and $\overline{1.13}$.

$$
M_{1.12}=\left(
\begin{array}{cccc}
-1 & (1-t)x_1 & 0 & tx_1^{-1}
\cr -x_1^{-1} & t+(1-t)x_1 & 0 & 0
\cr 0 & 1-t-x_1 & t & 0
\cr 0 & 0 & t & (1-t)x_1^{-1}-1
\end{array}\right);
$$
$$
M_{\overline{1.13}}=\left(
\begin{array}{ccccc}
t & -1 & 0 & 0 & 1-t
\cr 0 & -1 & 0 & tx_1 & (1-t)x_1
\cr 0 & 0 & t & 0 & 1-t-x_1
\cr 1-t & 0 & t & -1 & 0
\cr 1-t-x_1^{-1} & 0 & 0 & 0 & t
\end{array}\right).
$$

We calculate the determinants of the obtained matrices:

$s(1.12)=\mathrm{det}(M_{1.12})=-2t+4t^2-t^3+\frac{t^2}{x_1^2}-\frac{t^3}{x_1^2}+\frac{t}{x_1}-\frac{4 t^2}{x_1}+\frac{2t^3}{x_1}+tx_1-t^2x_1;$

$s(\overline{1.13})=\mathrm{det}(M_{\overline{1.13}})=-2t+4t^2-t^3+\frac{t}{x_1}-\frac{t^2}{x_1}+tx_1-4t^2x_1+2t^3x_1+t^2x_1^2-t^3x_1^2.$

Thus, the constructed polynomials prove the non-equivalence of the knot diagrams $1.12$ and $\overline{1.13}$ in the torus $T^2$.

We note the invariant proves the non-equivalence of the knot diagrams $1.13$ and $\overline{1.12}$ too \cite{GMV1}.

\end{example}

\section{The parity Hierarchy and the Construction of an Invariant Module}

Let us start with definitions.

\begin{definition}
\emph{A virtual knot diagram} is a planar graph of valency four endowed with the following structure: each vertex either has an over/under crossing structure or is marked by \emph{a virtual crossing} as shown in Fig. \ref{virt1}. Virtual knots are equivalence classes of virtual knot diagrams modulo generalized Reidemeister moves: classical Reidemeister moves which refer to classical crossings only and the detour move. The latter represents
the following: A branch of a knot diagram containing several consecutive virtual crossings but not containing classical crossings can be transformed
into any other branch with the same endpoints; new intersections and self-intersections are marked as virtual crossings (Fig. \ref{detour}).
\end{definition}

\begin{figure}
\centering\includegraphics[width=50pt]{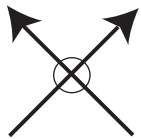} \caption{Virtual crossing} \label{virt1}
\end{figure}

\begin{figure}
\centering\includegraphics[width=300pt]{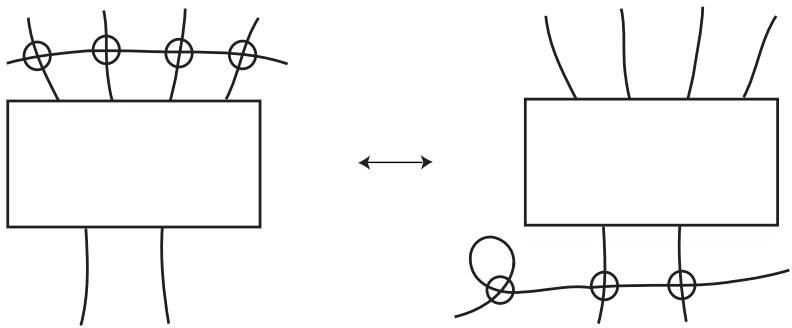} \caption{Detour move} \label{detour}
\end{figure}

All crossings except virtual ones are said to be classical \cite{MyBook}. Virtual knots are knots in thickened $2$-surfaces considered up to isotopy and stabilizations/destabilizations. In principle, the invariant we are going to construct can be constructed for knots in a concrete thickened surface but we want to restrict ourselves to virtual knots, so we shall not use variables $x_1,..., x_{2g}$, corresponding to handles.
The aim of the present section is to construct an invariant of virtual knots. From the module constructed in the present section one can extract a Laurent polynomial invariant of virtual knots, which shall be done in Section $5$.
Both invariants, the module itself and the polynomial, can be easily generalized for the case of a fixed thickened surface and with some enhancement coming from new generators corresponding to meridians.
However, this is rather straightforward and we are not going to dwell into it.

 The main idea behind the result of the present section is to use the parity hierarchy. In Section $2$, we distinguished between two types of classical crossings, the even ones and the odd ones, and applied different relations to different types of crossings. Now, we are going to use the parity hierarchy first introduced by V.O.Manturov in \cite{IMN}.
The Gaussian parity discriminates between even and odd crossings. It turns out that there is a natural way for a further discrimination. Namely, let $K$ be a virtual knot diagram, and let $f(K)$ be the image of $K$ under the map $f$ which maps odd crossings to virtual crossings.
Then all crossings of $f(K)$ correspond to even crossings of $K$. These crossings treated as crossings of $f(K)$ can be either even or odd.
Thus, we say that a crossing of $K$ is of type $0$ if it is odd,
otherwise we say that it is of type $1$ if the corresponding crossing of $f(K)$ is odd,
and if the corresponding crossing of $f(K)$ is even then we say that the initial crossing is of type $2$ \cite{H}. In each crossing we have relations, shown in Fig. \ref{virt} and \ref{klass}.

Note, that
1) the crossing obtained by adding a loop in the $1$-st move has type $2$;

2) in the $2$-nd move both crossings either have type $0$ or type $1$, or type $2$;

3) in the $3$-rd move we have the following cases:

a) all crossings $1,2,3$ have type $2$;

b) two crossings among  $1$, $2$, $3$ have type $0$ and the remaining one has type $1$;

c) two crossings among  $1$, $2$, $3$ have type $0$ and the remaining one has type $2$;

d) two crossings among  $1$, $2$, $3$ have type $1$ and the remaining one has type $2$.

\begin{figure}
\centering\includegraphics[width=200pt]{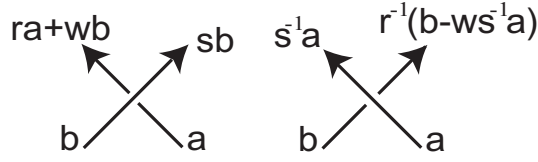} \caption{A crossing of type $0$} \label{virt}
\end{figure}

\begin{figure}
\centering\includegraphics[width=180pt]{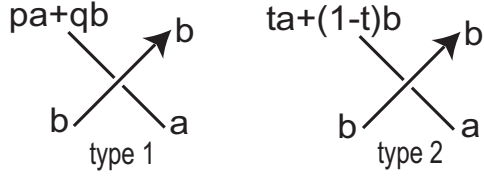} \caption{Crossings of types $1$ and $2$} \label{klass}
\end{figure}

Set $$R=\mathbb{Z}[t^{\pm1}, q, p^{\pm1}, s^{\pm1}, r^{\pm1}, w] / (q(p-t)=0, q^2=(1-t)(1-p), w(1-s)=0, w(t-r)=0,$$ $$w^2=(1-t)(1-rs), w(ps+q-1)=0, w(r+q-1)=0, w(p-r)=0, w^2=q(1-rs))$$ to be the quotient ring. Let us construct a module $N(K)$ over $R$. The generators are the short arcs of the knot diagram. At every crossing, we shall use the relations given in Fig. \ref{virt} and \ref{klass} according to the type of the crossing. Note, that only for crossings of type $0$ we pay attention to the orientation of the underpass;
thus we have two different rules for positive and negative crossings. For crossings of types $1$ and
$2$, we use only one relation as in the definition of the local labels for the polynomial $s$.

\begin{remark}
We can construct invariants coming from higher parity hierarchy in a similar way; this hierarchy shall lead to similar formulae, but for brevity we restrict ourselves to this parity of the third level.
\end{remark}

\begin{remark}
Unlike the previous sections, where we dealt with arcs of the diagram, now we shall use some "shortened" versions of arcs.
Namely, by a {\em short arc} we mean a branch of a knot diagram which goes from an underpass of a classical crossing (of any type) or an overpass of a classical crossing of type $0$ to the next underpass or overpass of type $0$. Thus, a short arc may contain virtual crossings and overpasses of types $1$ and $2$, but neither classical underpasses nor overpasses of type $0$. This is done for the following reason. At crossings of type $0$ we would like to introduce a module structure where both emanating arcs linearly depend on incoming arcs, however, the emanating part of the overcrossing arc shall not be equal to the incoming part of the same arc.
The number of generators (crossings) of the module to be constructed shall not be equal to the number of relations. However, in the very end of the paper, we shall construct a polynomial invariant coming from a simplification of this module. The idea behind this invariant is to treat crossings of type $0$ as virtual, and this turns out to be a partial case of what we are doing in the present section.
\end{remark}

\begin{theorem}
The module $N(K)$ is an invariant of virtual knots.
\end{theorem}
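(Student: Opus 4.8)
The plan is to follow the strategy of the proof of Theorem 1, but to work at the level of presentations rather than determinants, since --- as observed in the remark preceding the statement --- the presentation matrix of $N(K)$ is in general rectangular. Recall that $N(K)=\mathrm{coker}(M(K))$, where $M(K)$ is the matrix over $R$ whose columns are indexed by the short arcs of $K$ and whose rows record, for each classical crossing, the relation prescribed by Fig. \ref{virt} (at a crossing of type $0$, with the rule chosen according to the sign of the crossing) or Fig. \ref{klass} (at crossings of types $1$ and $2$); this cokernel does not depend on the chosen orderings of short arcs and crossings. The module presented by such a matrix is unchanged up to isomorphism under the Tietze operations: permuting rows or columns; adding an $R$-multiple of one row (resp. column) to another; rescaling a row or a column by a unit of $R$ --- the units being the monomials in $t^{\pm1},p^{\pm1},r^{\pm1},s^{\pm1}$ with a sign, so that a row may \emph{not} be rescaled by $q$ or $w$; and stabilization, i.e. adjoining (or deleting) a generator together with a relation identifying it with an $R$-combination of the others, in particular striking out a column that has a single nonzero entry equal to a unit, along with its row. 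It therefore suffices to check that each generalized Reidemeister move alters $M(K)$ only by a finite sequence of these operations.

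I would dispose of the detour move first: a short arc is by definition allowed to contain virtual crossings (and overpasses of types $1$ and $2$), while the detour move moves only purely virtual branches, so it neither creates nor destroys short arcs nor changes the connectivity of their endpoints; hence $M(K)$ is literally unchanged. For the classical moves I would, exactly as in Section 3, invoke \cite{Oht} to reduce to one version of the third Reidemeister move together with all versions of the first and second, the others following by the mirror and orientation symmetries. The first Reidemeister move adds a crossing of type $2$ (fact 1 above), so the type-$2$ relation --- which is the same ``$-1,\ 1-t,\ t$'' relation as at even crossings in Section 2 --- applies, and after splitting the affected arc by an auxiliary vertex the two rows in which the two presentations differ are exactly those computed for the even $R1$ case in Section 3 ($M'(K)$ and $M(K')$ there), which differ by a row rescaling by $t$. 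For the second Reidemeister move the two new crossings share a type (fact 2): if both are of type $1$ or $2$ the computation of matrices (\ref{5})--(\ref{12}) of Section 3 applies verbatim (with the variables $x_i$ suppressed), while if both are of type $0$ one repeats that computation using the two orientation-dependent rules of Fig. \ref{virt} for the two crossings (which have opposite signs), the necessary cancellations being the identities $w(1-s)=0$, $w(t-r)=0$, $w(p-r)=0$ and $w(r+q-1)=0$ of $R$.

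For the third Reidemeister move there are the four cases (a)--(d) listed before the statement. Case (a), all three crossings of type $2$, is word for word the all-even computation of Section 3, matrices (\ref{13})--(\ref{18}). In each remaining case I would run the same scheme as in Section 3: write the two $3\times\ast$ blocks of rows for the crossings taking part in the move, clear the first column by row operations (rescaling the relevant row by a suitable unit of $R$, as was done in Section 3 with $-t/p$), add one column to another, and then strike out the column with a single unit entry. This reduces the claim, in each case, to an identity between two explicit $3\times\ast$ matrices over $R$, and the point is that the defining relations of $R$ have been chosen to make exactly these identities hold: the relations $q(p-t)=0$ and $q^2=(1-t)(1-p)$ (already used in Section 3 for the types $0,0,1$ case, matrices (\ref{19})--(\ref{23})) govern the interaction of crossings of types $0$ and $1$, while $w(ps+q-1)=0$, $w(r+q-1)=0$, $w(p-r)=0$, $w(1-s)=0$, $w(t-r)=0$, $w^2=(1-t)(1-rs)$ and $w^2=q(1-rs)$ are precisely what is needed in cases (b), (c), (d), where crossings of type $2$ appear alongside crossings of types $0$ and $1$.

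I expect the main obstacle to be exactly this last verification: organizing the bookkeeping at a crossing of type $0$, where the overstrand is broken into two short arcs, so that all four short-arc ends at such a crossing must be tracked separately; and confirming that in each subcase of $R3$ only row rescalings by units of $R$ are used, so that every operation performed is genuinely invertible over $R$ and the two presentation matrices are related by Tietze moves. Once all the moves are checked, equivalent diagrams have presentation matrices related by Tietze operations, hence isomorphic cokernels, and $N(K)$ is an invariant of virtual knots.
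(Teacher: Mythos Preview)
Your overall framework---Tietze moves on the presentation matrix---is sound, and is essentially a reformulation of the paper's approach, which verifies directly that the emanating short arcs are the same $R$-linear combinations of the incoming short arcs on both sides of each move. But your identification of which relations govern which cases is scrambled, and this matters.

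The key point you have backwards is the dictionary between Section~3 and Section~4. In Section~4 the labels $(-1,1-t,t)$ are assigned to crossings of type~$2$, the labels $(-1,p,q)$ to crossings of type~$1$, and the \emph{new} labels $(r,s,w)$---together with the rule that the overstrand is broken, so each such crossing contributes \emph{two} relations---to crossings of type~$0$. Hence the Section~3 computation with matrices (\ref{19})--(\ref{23}) (two ``odd'' and one ``even'' crossing, using $q(p-t)=0$ and $q^{2}=(1-t)(1-p)$) is precisely case~(d) (two of type~$1$, one of type~$2$), not a ``types $0,0,1$'' case. The paper says exactly this: case~(d) ``follows from the invariance of the polynomial $s$''. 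Your claim that the $w$-relations are needed in case~(d) is therefore wrong: no $w$ appears there at all.

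Conversely, the $w$-relations are needed in cases~(b) and~(c), where type-$0$ crossings interact with a type-$1$ (case~(b), using $w(r+q-1)=0$, $w(p-r)=0$) or a type-$2$ (case~(c), using $w(1-s)=0$, $w(t-r)=0$, $w^{2}=(1-t)(1-rs)$). In these cases the local block is \emph{not} $3\times\ast$: each type-$0$ crossing contributes two rows, so in~(b) and~(c) you have five rows, not three, and the Section~3 row-reduction scheme does not transplant verbatim. Similarly, for $R2$ with both crossings of type~$0$ only the variables $r,s,w$ are present, so relations such as $w(t-r)=0$ or $w(p-r)=0$ cannot be what makes the cancellation work; the paper checks this case directly from the figure and no relation of $R$ is invoked. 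Once you correct the type-to-label dictionary and redo the bookkeeping at type-$0$ crossings (four short arcs, two relations), the argument goes through; the paper's input--output comparison is simply the cleaner way to package the same verification.
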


\begin{proof}

To prove the theorem we verify invariance under the Reidemeister moves.
The invariance under the $1$-st move is proved just as in the case of the invariance of the Alexander polynomial.
Let us prove the invariance under the $2$-nd move in the case shown in Fig. \ref{move2module1}. The crossings $1$ and $2$ have the same type. When the crossings $1$ and $2$ have type $2$, then in this case the invariance follows from the invariance of the Alexander polynomial, too.
Let us consider the case when the crossings $1$ and $2$ are of type $1$. We see in Fig. \ref{move2module1} that the expressions of the emanating short arcs in terms of incoming short arcs for the diagrams $K$ and $K'$ are equal. Similarly we can prove the invariance under the other three cases of the $2$-nd Reidemeister move when both crossings $1$ and $2$ are of type $1$.

\begin{figure}
\centering\includegraphics[width=200pt]{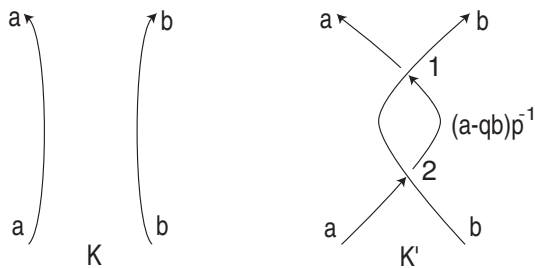} \caption{The case of the second Reidemeister move with crossings of type $1$} \label{move2module1}
\end{figure}

Now, let us prove the invariance in the case when the crossings $1$ and $2$ are of type $0$. We consider the case shown in Fig. \ref{move2module0}. Extending the emanating short arcs as linear combinations of incoming short arcs we see that the expressions of the emanating short arcs in terms of incoming short arcs for the diagrams $K$ and $K'$ are equal. Similarly we can prove the invariance under the other three cases of the $2$-nd Reidemeister move when the crossings $1$ and $2$ are of type $0$.

\begin{figure}
\centering\includegraphics[width=200pt]{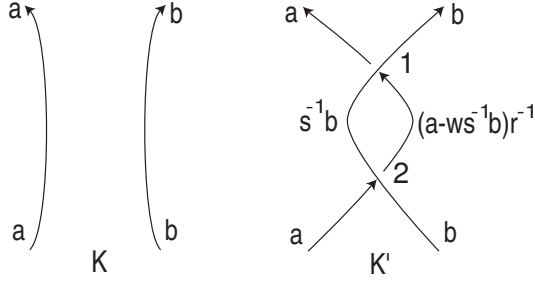} \caption{The case of the second Reidemeister move with crossings of type $0$} \label{move2module0}
\end{figure}

Let us prove the invariance under the third Reidemeister move. As already mentioned, to establish the equivalence of the two diagrams it is sufficient to use only one version of the $3$-rd Reidemeister move. Suppose that the diagram $K$ is obtained from the diagram $K'$ by the $3$-rd move (Fig. \ref{3move}).

\begin{figure}
\centering\includegraphics[width=250pt]{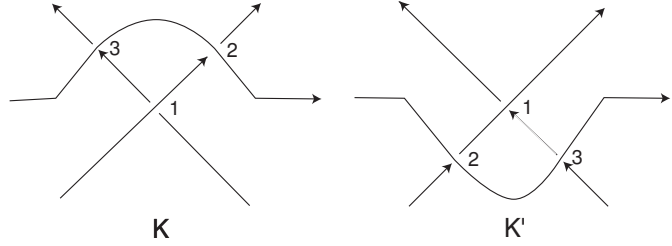} \caption{The third Reidemeister move} \label{3move}
\end{figure}

Note, that the case $3a$ follows from the invariance of the Alexander polynomial. Let us prove the invariance in the case $3b$, when in the diagrams $K$ and $K'$ the crossings $1$ and $3$ are of type $0$ and the crossing $2$ is of type $1$ (Fig. \ref{3move100}).

\begin{figure}
\centering\includegraphics[width=350pt]{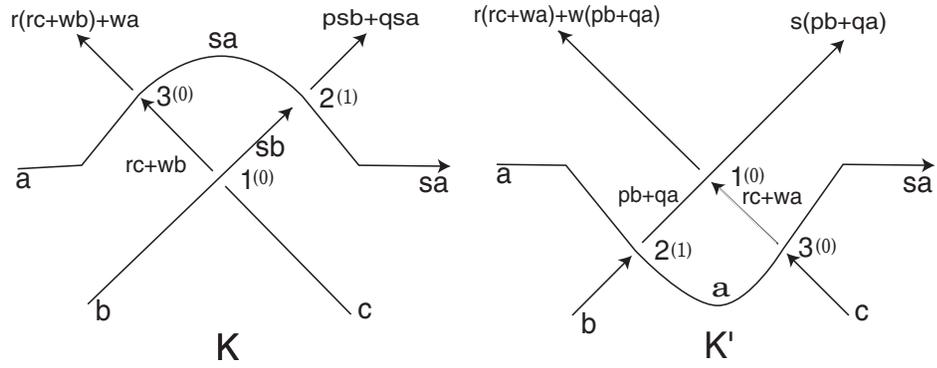} \caption{The case $3b$ of the third Reidemeister move} \label{3move100}
\end{figure}

Since the relations $w(r+q-1)=0, w(p-r)=0$ hold in the quotient ring $R$, then in this subcase the module $N$ is the invariant. Similarly we can prove the invariance under other two subcases of the case $3b$. Now, let us prove the invariance in the case $3c$. Let the crossings $1$ and $2$ be of type $0$ and the crossing $3$ be of type $2$. Then the expression of the emanating short arcs in terms of incoming short arcs for the diagrams $K$ and $K'$ shall be as shown in Fig. \ref{3move200}. Since the relations $w(1-s)=0, w^2=(1-t)(1-rs), w(t-r)=0$ hold in the quotient ring $R$, then in this subcase the module $N$ is invariant. Similarly we can prove the invariance under other two subcases of the $3$-rd case. The invariance in the case $3d$ follows from the invariance of the polynomial $s$.

\begin{figure}
\centering\includegraphics[width=350pt]{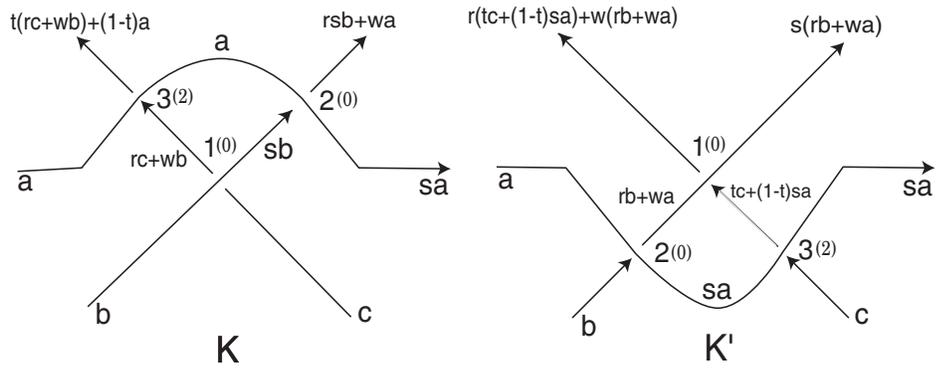} \caption{The case $3c$ of the third Reidemeister move} \label{3move200}
\end{figure}

Thus, the constructed module $N(K)$ over the quotient ring $R$ is invariant of virtual knots. \end{proof}

\section{Particular case of the module $N(K)$}

It turns out that the invariant module $N(K)$ can be simplified in order to obtain an invariant polynomial for virtual knots.

Indeed, let us construct a simplification of the module $N(K)$, to be denoted by $N'(K)$, which is constructed as follows. First, for the ground ring we take $$R'=\mathbb{Z}[t^{\pm1}, q, p^{\pm1}, s^{\pm1}]/(q(p-t)=0, q^2=(1-t)(1-p)).$$ Then, we take short arcs to be generators as in the case of the module $N$ and crossings to be relations. However, we make the following simplifications. In crossings of types $1$ and $2$ we have the relations as shown in Fig. \ref{klass}. The crossing of type $0$ we replace by the virtual crossing and have the relations as shown in Fig. \ref{virt2}. One can obtain this simplification of the module $N(K)$ if $r=s^{-1},w=0$ for the module $N(K)$.

Obvious

\begin{theorem}
The module $N'(K)$ is an invariant of virtual knots.
\end{theorem}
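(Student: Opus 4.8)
The plan is to obtain Theorem~4 as an immediate consequence of Theorem~3, by realizing $N'(K)$ as a specialization of the module $N(K)$. First I would make precise the remark already made before the statement, namely that $N'(K)$ is what one gets from $N(K)$ upon setting $r=s^{-1}$ and $w=0$. Concretely, define the ring homomorphism $\varphi\colon R\to R'$ by $\varphi(t)=t$, $\varphi(q)=q$, $\varphi(p)=p$, $\varphi(s)=s$, $\varphi(r)=s^{-1}$, $\varphi(w)=0$. The only genuine verification at this stage is that $\varphi$ is well defined, i.e.\ that each of the nine defining relations of $R$ is sent to $0$ in $R'$. The relations $q(p-t)=0$ and $q^2=(1-t)(1-p)$ are carried over verbatim; every relation containing the factor $w$ is killed because $\varphi(w)=0$; and the two relations $w^2=(1-t)(1-rs)$, $w^2=q(1-rs)$ map to $0=(1-t)(1-s^{-1}s)=0$ and $0=q(1-s^{-1}s)=0$, using $\varphi(rs)=1$. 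Hence $\varphi$ is well defined and identifies $R'$ with $R/(w,\;rs-1)$.

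Next I would check that this same substitution turns the local relations defining $N(K)$ into those defining $N'(K)$. At crossings of types $1$ and $2$ (Fig.~\ref{klass}) the relations only involve $t,p,q$ and are literally unchanged. At a crossing of type $0$ (Fig.~\ref{virt}), imposing $w=0$ and $r=s^{-1}$ degenerates the two-relation rule to the single virtual-crossing rule of Fig.~\ref{virt2}: the emanating overpass arc becomes equal to the incoming overpass arc, so these two short arcs fuse into one generator that passes transparently through the crossing exactly as through a virtual crossing, and the surviving relation is the virtual-crossing relation in the variable $s$. In particular the generating set ``short arcs of $N(K)$'' becomes, after declaring the type-$0$ crossings virtual, precisely the generating set ``short arcs of $N'(K)$'', and the presentation of $N'(K)$ is the one obtained from the presentation of $N(K)$ by pushing all coefficients forward along $\varphi$. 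Therefore $N'(K)\cong N(K)\otimes_R R'$ as $R'$-modules.

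Finally I would conclude: by Theorem~3 the $R$-module $N(K)$ has an isomorphism class unchanged by all generalized Reidemeister moves; applying the base-change functor $-\otimes_R R'$, which is functorial and does not depend on the diagram, the $R'$-module $N'(K)\cong N(K)\otimes_R R'$ is likewise unchanged under those moves. Hence $N'(K)$ is an invariant of virtual knots.

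I do not expect a real obstacle here; the only things requiring care are the bookkeeping that $\varphi$ respects all nine defining relations of $R$ and the explicit check that the pictorial type-$0$ relations of Fig.~\ref{virt} genuinely collapse to those of Fig.~\ref{virt2} under $r=s^{-1}$, $w=0$ (in particular that the overpass becomes transparent, which is what legitimizes merging the two short arcs at a type-$0$ crossing). As an alternative route, one can simply reprove invariance of $N'(K)$ directly along the lines of the proof of Theorem~3: the first move and all cases involving only types $1$ and $2$ reduce to the invariance of the Alexander polynomial as before, while at type-$0$ crossings invariance is now automatic since such a crossing is virtual and is absorbed by the detour move.
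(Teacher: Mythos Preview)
Your approach is correct and is precisely the argument the paper has in mind: the paper's entire ``proof'' is the single word \emph{Obvious}, preceded by the observation that $N'(K)$ is obtained from $N(K)$ by setting $r=s^{-1}$ and $w=0$. You have simply made this rigorous by writing down the ring map $\varphi\colon R\to R'$, checking it is well defined, and identifying $N'(K)\cong N(K)\otimes_R R'$ so that invariance follows from Theorem~3 by base change.

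One small point of bookkeeping: in the paper, $N'(K)$ is defined with the \emph{same} generating set of short arcs as $N(K)$ (the paper says ``we take short arcs to be generators as in the case of the module $N$''), and the type-$0$ relations of Fig.~\ref{virt2} relate the four short arcs at such a crossing by units in $s^{\pm1}$. So the short arcs at a type-$0$ crossing do not literally ``fuse into one generator'' at the level of the presentation of $N'(K)$; rather, the relations there become invertible, which is of course equivalent up to module isomorphism. The fusing you describe is what happens one step later, in the matrix $N''(K)$ and the polynomial $n'(K)$. This does not affect your argument, since the tensor-product identification $N'(K)\cong N(K)\otimes_R R'$ holds directly at the level of presentations without needing to eliminate those generators.
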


The new module $N'(K)$ allows one to construct a simple representation and from the invariance of $N'(K)$ follows the invariance of a polynomial $n'(K)$ in $R'$. The module $N(K)$ can be treated as follows: at every crossing of type $0$, the label of a short arc gets multiplied by $s$ or by $s^{-1}$. Thus, we can think of a virtual diagram $K$ as having two types of virtual crossings: those originally being virtual and those of type $0$ which "disappear" or "become virtual" after the projection map $f$. Nevertheless, the newborn crossings preserve a piece of information, namely, when passing through the crossing, we multiply the label by $s^{\pm 1}$. This can be easily packaged into a matrix with $n$ generators and $n$ relations, where $n$ is the number of crossings of types $1$ and $2$. Crossings of type $0$ are ignored, and different arcs approaching the same crossing of type $0$, have labels which differ by $s^{\pm 1}$. This leads one to the matrix which is constructed as follows.

Let $K$ be a diagram of the virtual knot with $n$ classical crossings. We construct an $n\times n$-matrix $N''(K)$ as follows. With each short arc, we associate a column, and a row corresponds to each crossing. If some $j$-th short arc is not incident to a $i$-th crossing, then we set $N''_{ij}$ to be $0$. In the case, when only one short arc of the $j$-th arc is incident to the $i$-th crossing, then the element $N''_{ij}$ shall be equal to one of the monomial $-1,1-t,t$ in crossings of type $2$ and $-1,p,q$ in crossings of type $1$ (Fig. \ref{klass}, Fig. \ref{virt2}). If there are several incident short arcs, then
corresponding element of the matrix is equal to the sum of such monomials for all incident short arcs.
We obtain the matrix $N''(K)$ dependent on diagram $K$. Let $n'(K)$ be $\mathrm{det}(N''(K))$, where $n'(K)\in R'$.

\begin{figure}
\centering\includegraphics[width=80pt]{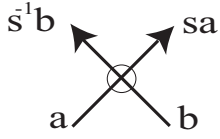} \caption{Relations in the virtual crossing} \label{virt2}
\end{figure}

\begin{theorem}
If two diagrams $K$ and $K'$ of virtual knots are equivalent, then $n'(K)=\pm n'(K')\cdot t^{\alpha}p^{\beta}q^{\gamma}$ for some integers $\alpha,\beta,\gamma$.
\end{theorem}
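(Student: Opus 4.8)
The plan is to deduce Theorem 4 directly from the invariance of the module $N'(K)$ (Theorem 3) by observing that $n'(K)=\det(N''(K))$ is, up to the usual units, the order of a presentation matrix of $N'(K)$. First I would set up the correspondence between $N''(K)$ and the module $N'(K)$ carefully: the module $N'(K)$ has generators indexed by short arcs and relations indexed by classical crossings of types $1$ and $2$, with crossings of type $0$ contributing only a factor $s^{\pm1}$ when a short arc passes through them. Since the number of short arcs equals the number of crossings of types $1$ and $2$ (each such crossing emits exactly one short arc, because type-$0$ overpasses and underpasses do not terminate short arcs in a way that increases the count — type-$0$ crossings are treated as virtual), the presentation matrix is exactly the square matrix $N''(K)$, and $n'(K)=\det(N''(K))$ is its $0$-th Fitting invariant, well defined up to a unit of $R'$.

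Next I would run through the Reidemeister moves, exactly paralleling the proof of Theorem 1 in Section 3. The key point, already recorded in the paper, is that the matrix entries at type-$1$ crossings use the monomials $-1,p,q$ (the ``odd'' relations of Section 2) and at type-$2$ crossings use $-1,1-t,t$ (the ``even'' relations), while type-$0$ crossings are erased and merely shift labels by $s^{\pm1}$ — this is the content of Fig. \ref{klass} and Fig. \ref{virt2}. So: invariance under the detour move is immediate since it does not touch classical crossings and the $s^{\pm1}$ bookkeeping along a branch is unaffected; invariance under $R1$ is the Alexander-polynomial computation (the new crossing has type $2$); invariance under $R2$ splits according to whether both crossings are of type $0$, type $1$, or type $2$, and in each case one performs the same row/column operations as in equations \eqref{5}--\eqref{12}, with the type-$0$ case producing only compensating powers of $s$; invariance under $R3$ uses the four cases $3a$--$3d$ enumerated before, and in cases $3b$ and $3c$ one invokes the ring relations $w(r+q-1)=0$, $w(p-r)=0$, $w(1-s)=0$, $w^2=(1-t)(1-rs)$, $w(t-r)=0$ — but under the specialization $r=s^{-1}$, $w=0$ these all hold trivially in $R'$, so in fact the $3b$ and $3c$ verifications collapse and the only genuine content is $3a$ (Alexander) and $3d$ (the polynomial $s$ of Theorem 1). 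Finally, moving a crossing through a side is handled exactly as in Section 3 with the same $x_i\mapsto$ unit rescaling (here trivial since we suppressed the $x_i$).

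Assembling these, every Reidemeister move and detour move changes $N''(K)$ by a sequence of elementary row and column operations together with multiplication of some rows/columns by units of $R'$ — namely powers of $t$, $p$, $q$, $s$ — hence changes $\det(N''(K))$ by a unit of $R'$. Since the only units of $R'=\mathbb{Z}[t^{\pm1},q,p^{\pm1},s^{\pm1}]/(q(p-t),\,q^2-(1-t)(1-p))$ that can arise this way are $\pm t^\alpha p^\beta q^\gamma$ (the $s$-rescalings cancel in pairs along each branch, or can be absorbed since $s$ is a unit — one should check that the net power of $s$ is zero, which follows because each short arc enters and leaves the diagram a balanced number of times through type-$0$ crossings, or simply allow $s^\delta$ and note it too is a unit), we conclude $n'(K)=\pm n'(K')\, t^\alpha p^\beta q^\gamma$.

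The main obstacle I expect is precisely the bookkeeping that $N''(K)$ is genuinely $n\times n$ with $n$ the number of type-$1$ and type-$2$ crossings, i.e. that generators (short arcs) and relations (those crossings) are equinumerous after type-$0$ crossings are declared virtual; this requires care because the definition of short arc in Remark 3 allows a short arc to run through type-$0$ overpasses, so one must verify the Euler-characteristic-style count that erasing all type-$0$ crossings from the underlying $4$-valent graph leaves a $4$-valent graph whose vertex count equals its number of edge-components-starting-at-underpasses. Beyond that, the move-by-move checks are routine given Section 3 and the specialization $r=s^{-1}$, $w=0$, and the determination of the unit group of $R'$ is the last point needing a short argument.
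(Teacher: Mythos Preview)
Your proposal is correct and takes essentially the same approach as the paper: the paper's own proof consists of a single short paragraph stating that the argument ``repeats the invariance proof of the module because the defining relations for the module correspond to the rows of the matrix'' and that ``all these modifications of matrices can be written down explicitly.'' You have in fact supplied considerably more detail than the paper does, and the two residual worries you flag (the $n\times n$ counting of generators versus relations and the possible appearance of $s$-powers) are points the paper itself glosses over.
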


Principally, the proof of this theorem repeats the invariance proof of the module
because the defining relations for the module correspond to the rows of the matrix.
Thus, the equivalence of two defining sets of relations means that the corresponding
matrices have the same determinant up to multiplication by the invertible elements
of the ground ring. Certainly, all these modifications of matrices can be written
down explicitly.

\section*{Acknowledgments}

I express my gratitude V.O.Manturov for the formulation of the problem, useful remarks in preparation of this article and fruitful consultations.

\newpage

\end{document}